\numberwithin{equation}{section}
\newtheorem{theorem}{Theorem}[section]
\newtheorem{lemma}[theorem]{Lemma}
\theoremstyle{remark}
\newtheorem{remark}[theorem]{Remark}
\begin{document}
\sloppy
\title [On Fridman invariant, injectivity radius function and...]{On Fridman invariant, injectivity radius function and squeezing function}

\author[A. Kumar]{Akhil Kumar}
\address{Department of Mathematics, University of Delhi,
Delhi--110 007, India}
\email{98.kumarakhil@gmail.com}

\author[S. K. Pant]{Sanjay Kumar Pant}
\address{Department of Mathematics, Deen Dayal Upadhyaya College, University of Delhi,
Delhi--110 078, India}
\email{skpant@ddu.du.ac.in}

\begin{abstract}
We give a class of domains for which Fridman invariant and injectivity radius function coincide with respect to Carathéodory metric. We give explicit expressions of the squeezing functions for these  domains and investigate some of their properties.  
\end{abstract}
\keywords{Squeezing function;  Injectivity radius function; Fridman invariants.}
\subjclass[2020]{32F45, 32H02}
\maketitle

\section{Introduction}
In \cite{recent2}, Ng, Tang and Tsai raised a  question: ``For which $c$-hyperbolic complex manifold $X$ we have $H_{X}^{c}(z) \leq i_{X}^{c}(z)$ for all $z\in X$?" In one complex dimension, they have already shown that $H_{X}^{k}(z) = i_{X}^{k}(z)$ for all $z\in X$, where $X$ is any 1-dimensional $k$-hyperbolic complex manifold. In the context of this question, we state our main result
\begin{theorem}\rm\label{main1}
Let  $\Omega = \mathbb{D}\setminus K$ be a domain in $\mathbb{C}$, where $\mathbb{D} = \lbrace \xi\in \mathbb{C}:  |\xi| < 1 \rbrace$ and $K$ is a removable set in $\mathbb{D}$. Then, we have 
$$H_{\Omega}^{c}(z) = i_{\Omega}^{c}(z) = \min_{w\in K}\left\lbrace \left|\dfrac{w-z}{1-\overline{z}w}\right|\right\rbrace \  \mbox{for all}\ z\in \Omega.$$
\end{theorem}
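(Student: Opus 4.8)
The plan is to reduce everything to the elementary geometry of pseudohyperbolic disks by exploiting removability. Write $\rho(z,w)=\left|\frac{z-w}{1-\overline z w}\right|$ for the pseudohyperbolic distance on $\mathbb D$. First I would show that removability of $K$ forces $c_\Omega=\rho$ on $\Omega$. Indeed, $\Omega\subset\mathbb D$ gives $c_\Omega\ge c_{\mathbb D}=\rho$ by the distance-decreasing property under the inclusion-induced restriction $\mathrm{Hol}(\mathbb D,\mathbb D)\hookrightarrow\mathrm{Hol}(\Omega,\mathbb D)$; conversely, every $f\in\mathrm{Hol}(\Omega,\mathbb D)$ extends across the removable set $K$ to some $\tilde f\in\mathrm{Hol}(\mathbb D,\overline{\mathbb D})$, and the maximum principle places $\tilde f\in\mathrm{Hol}(\mathbb D,\mathbb D)$ unless $f$ is constant, so $c_\Omega\le c_{\mathbb D}$. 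Hence the Carathéodory balls are explicit, $B^c_\Omega(z,r)=B^\rho_{\mathbb D}(z,r)\setminus K$, where $B^\rho_{\mathbb D}(z,r)$ is an honest Euclidean disk. Setting $d(z):=\min_{w\in K}\rho(z,w)$, the whole problem becomes a dichotomy: $B^c_\Omega(z,r)$ is a genuine simply connected disk when $r\le d(z)$, and it wraps around a point of $K$ when $r>d(z)$.

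For the lower bounds I would argue that when $r\le d(z)$ the open ball $\{w:\rho(z,w)<r\}$ contains no point of $K$ (the nearest points of $K$ lie on the sphere $\rho=d(z)$), so $B^c_\Omega(z,r)=B^\rho_{\mathbb D}(z,r)$ is a Euclidean disk contained in $\Omega$. Composing a Riemann map with an automorphism of $\mathbb D$ produces an injective $f\in\mathrm{Hol}(\mathbb D,\Omega)$ with $f(0)=z$ and $f(\mathbb D)=B^\rho_{\mathbb D}(z,r)\supseteq B^c_\Omega(z,r)$, so $H^c_\Omega(z)\ge r$; and the same disk, being simply connected inside $\Omega$, shows $i^c_\Omega(z)\ge r$ directly from the defining property of the injectivity radius (the largest radius for which the Carathéodory ball is injectively covered, i.e.\ simply connected within $\Omega$). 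Letting $r\uparrow d(z)$ (indeed $r=d(z)$ already works) gives $H^c_\Omega(z)\ge d(z)$ and $i^c_\Omega(z)\ge d(z)$.

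The heart of the matter is the matching upper bound, showing that as soon as $r>d(z)$ neither invariant can reach $r$. Fix $w_0\in K$ with $\rho(z,w_0)<r$, so $w_0$ is an interior point of the Euclidean disk $B^\rho_{\mathbb D}(z,r)$. Suppose $B^c_\Omega(z,r)$ were contained in a simply connected subdomain $U\subseteq\Omega$ (for $H^c$, $U=f(\mathbb D)$; for $i^c$, $U$ is the witnessing simply connected neighbourhood). Since $U\subseteq\Omega=\mathbb D\setminus K$ we have $w_0\notin U$, hence $U\subseteq\mathbb C\setminus\{w_0\}$. The key topological input is that a removable (null for $H^\infty$, i.e.\ analytic-capacity-zero) set is totally disconnected, because any nondegenerate continuum has positive analytic capacity; consequently $\{w_0\}$ is a connected component of $K$ and can be enclosed, inside any small disk about $w_0$, by a Jordan curve $\gamma\subset B^\rho_{\mathbb D}(z,r)\setminus K=B^c_\Omega(z,r)$ with winding number $1$ about $w_0$. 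Then $\gamma\subset U$ is null-homotopic in the simply connected $U$, yet it links $w_0\notin U$, a contradiction. Thus $H^c_\Omega(z)\le d(z)$ and $i^c_\Omega(z)\le d(z)$.

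Combining the two bounds yields $H^c_\Omega(z)=i^c_\Omega(z)=d(z)=\min_{w\in K}\rho(z,w)$. I expect the genuine obstacle to be this topological step in the upper bound: converting $r>d(z)$ into an actual non-contractible loop requires knowing that $K$ is thin enough (totally disconnected, each point separable from the rest by curves in its complement) for a bounding Jordan curve avoiding $K$ to exist. Verifying this for the precise class of removable sets in play, and confirming that it meshes cleanly with the paper's exact definition of the injectivity radius $i^c_\Omega$, is where the care is needed; the reduction $c_\Omega=\rho$ and the disk embeddings are then routine.
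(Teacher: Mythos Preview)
Your proposal is correct and follows essentially the same approach as the paper: use removability to identify $c_\Omega$ with the Poincar\'e metric on $\mathbb{D}$, observe that Carath\'eodory balls of radius at most $d(z)$ coincide with Euclidean disks lying in $\Omega$, and rule out larger radii via a simple-connectivity obstruction. The only differences are that the paper asserts the key topological step---that $B^c_\Omega(z,r)\subseteq f(\mathbb{D})$ with $f(\mathbb{D})$ simply connected forces $B^c_\Omega(z,r)=B^c_{\mathbb{D}}(z,r)$---without the Jordan-curve/total-disconnectedness justification you carefully supply, and it closes the chain of inequalities by invoking $i^c_\Omega\le H^c_\Omega$ from \cite{recent2} rather than bounding both invariants directly as you do.
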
 
 
Here, $H_{\Omega}^{c}(z)$ and $i_{\Omega}^{c}(z)$ refer to the Fridman invariant and the injectivity radius function with respect to the Carathéodory metric. 

In \cite{frid1979, frid1983}, Fridman introduced a holomorphic invariant known as the Fridman invariant. Let $X$ be an $n$-dimensional $d$-hyperbolic complex manifold. For $z\in X$, the Fridman invariant denoted by $h_{X}^{d}$, is defined as
$$h_{X}^{d}(z)= \inf\left\lbrace \dfrac{1}{r} : B_{X}^{d}(z, r)\subseteq f(\mathbb{B}^{n}), f:\mathbb{B}^{n}\to X \right\rbrace,$$
where $f:\mathbb{B}^{n}\to X$ is an injective holomorphic map and $B_{X}^{d}(z, r)$ is a ball centred at $z$ with radius $r$ with respect to $d$-metric. The Fridman invariant  is very useful in decoding some geometrical properties of the  manifold. For example, Fridman \cite{frid1983} has shown that if Fridman invariant with respect to Kobayashi metric $h_{X}^{k}(z) = 0$ for some $z\in X$, where $X$ is any connected $k$-hyperbolic complex manifold, then $X$ is biholomorphic to $\mathbb{B}^{n}$. For comparison purposes, Nikolov and Verma \cite{kaushal} considered a modification in $h_{X}^{d}$, which is denoted by $H_{X}^{d}$ and defined as 
$$H_{X}^{d}(z) = \sup \lbrace \tanh r : B_{X}^{d}(z, r) \subseteq f(\mathbb{B}^{n}), f:\mathbb{B}^{n}\to X \rbrace,$$
where $f:\mathbb{B}^{n}\to X$ is an injective holomorphic map. 

Let $X$ be an $n$-dimensional $d$-hyperbolic complex manifold. For $z\in X$, the injectivity radius function with respect to $d$-metric denoted by $i_{X}^{d}$, is defined as
$$i_{X}^{d}(z) = \sup \lbrace \tanh r : B_{X}^{d}(z, r)\ \mbox{is simply connected} \rbrace.$$
For Carathéodory and Kobayashi metrics, we denote injectivity radius functions by $i_{X}^{c}$ and $i_{X}^{k}$ respectively. The injectivity radius with respect to $d$-metric is defined as
$$i_{X}^{d} = \inf _{z\in X} i_{X}^{d}(z).$$
A Riemann Surface $X$ is said to be uniformly thick if the injectivity radius function $i_{X}^{k}(z)$ on it admits a positive lower bound. In \cite{recent2}, Ng, Tang and Tsai extended the definition of uniformly thickness to higher dimension. An $n$-dimensional $d$-hyperbolic complex manifold is said to be $d$-uniformly thick if Fridman invariant $H_{X}^{d}(z)$ on it, admits a positive lower bound. For example, all simply connected bounded domains in $\mathbb{C}$ are both $c$-uniformly thick and $k$-uniformly thick. For other important properties of injectivity radius function, one can refer to the following papers: \cite{injectivity1, injectivity3, injectivity2}.

Let us recall the definition of a removable set. A compact set $K$ in $\mathbb{C}$ is said to be removable for bounded holomorphic functions, or simply removable, if for every open superset $U$ of $K$ in $\mathbb{C}$, each bounded and holomorphic function on $U\setminus K$ extends holomorphically on the whole of $U$. One can see that a removable set $K$ in $\mathbb{C}$ is totally disconnected. 

\begin{remark}\rm\label{main300}
Theorem \ref{main1} implies that for the class of domains $\Omega = \Omega'\setminus K$, where $\Omega'$ is any simply connected domain other than $\mathbb{C}$ and $K$ is any removable set in $\Omega'$, the Fridman invariant and the injectivity radius function coincide with respect to the Carathéodory metric.
\end{remark}

\begin{remark}\rm\label{main301}
In Theorem \ref{main1}, we take the domain obtained by deleting a removable set from the unit disc. It is natural to ask that what happens to the Fridman invariant and the injectivity radius function if we replace the unit disc by an arbitrary domain in Theorem \ref{main1}? In Theorem \ref{main10}, we show that one side of the inequality holds.
\end{remark}

\begin{theorem}\rm\label{main10}
Let  $\Omega$ be a domain in $\mathbb{C}$ and $K$ be a removable set in $\Omega$. Then for $D = \Omega\setminus K$, we have
$$i_{D}^{c}(z) \leq H_{D}^{c}(z) \leq \min_{w\in K} \tanh[c_{\Omega}(z, w)].$$
\end{theorem}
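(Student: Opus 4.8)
The statement packages two inequalities, and the plan is to prove them independently. The left inequality $i_D^c(z)\le H_D^c(z)$ holds for any planar hyperbolic domain and is essentially a consequence of the Riemann mapping theorem: if $B_D^c(z,r)$ is simply connected, then, being an open, connected, simply connected proper subdomain of $\mathbb{C}$, it is biholomorphic to $\mathbb{D}$ via some map $f$. Regarding $f$ as an injective holomorphic map $f\colon\mathbb{D}\to D$ whose image is exactly $B_D^c(z,r)$, the definition of $H_D^c$ gives $\tanh r\le H_D^c(z)$, and taking the supremum over all admissible $r$ yields $i_D^c(z)\le H_D^c(z)$. The substance of the theorem is therefore the right inequality, on which I would concentrate.

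Fix $w_0\in K$; it suffices to show $H_D^c(z)\le\tanh[c_\Omega(z,w_0)]$, and since $\tanh$ is increasing this reduces to proving that $r\le c_\Omega(z,w_0)$ whenever $B_D^c(z,r)\subseteq f(\mathbb{D})$ for an injective holomorphic $f\colon\mathbb{D}\to D$. I would argue by contradiction, assuming $r>c_\Omega(z,w_0)$. The first key observation is that removability forces the Carathéodory distances of $D$ and $\Omega$ to agree on $D$ (the same mechanism that drives Theorem \ref{main1}): the inclusion $D\hookrightarrow\Omega$ gives $c_\Omega\le c_D$, while every holomorphic $g\colon D\to\mathbb{D}$, being bounded, extends across $K$ to a holomorphic $\tilde g\colon\Omega\to\overline{\mathbb{D}}$ which, by the maximum principle, maps into $\mathbb{D}$ (or is constant). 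Hence the defining families for $c_D$ and $c_\Omega$ coincide on $D$, and $c_D(z,\zeta)=c_\Omega(z,\zeta)$ for all $\zeta\in D$. Using continuity of $c_\Omega(z,\cdot)$ together with $c_\Omega(z,w_0)<r$, I would choose a Euclidean disc $\Delta$ centred at $w_0$ with $\overline{\Delta}\subseteq\Omega$ and $c_\Omega(z,\zeta)<r$ throughout $\Delta$. Then every $\zeta\in\Delta\cap D$ satisfies $c_D(z,\zeta)<r$, so $\Delta\setminus K=\Delta\cap D\subseteq B_D^c(z,r)\subseteq f(\mathbb{D})=:V$. Since $V\subseteq D$ is disjoint from $K$, this forces $V\cap\Delta=\Delta\setminus K$ exactly, while the point $w_0\in K$ is omitted by $V$.

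The contradiction is topological. The image $V=f(\mathbb{D})$ is simply connected, since an injective holomorphic map is a homeomorphism onto its image, yet $V$ surrounds the omitted point $w_0$. To make this rigorous I would use that $K$ is totally disconnected: the set $K\cap\overline{\Delta}$ is compact and totally disconnected, hence zero-dimensional, so $w_0$ can be separated from the compact set $(K\cap\overline{\Delta})\setminus\{w_0\}$ by disjoint clopen pieces. Fattening this separation in the plane, one obtains a Jordan curve $\gamma\subseteq\Delta\setminus K\subseteq V$ that encloses $w_0$ but meets no other point of $K$. This curve has winding number $1$ about $w_0\notin V$, so it cannot be null-homotopic in $V$, contradicting the simple connectivity of $V$. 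I expect this separation-and-enclosure step, which converts the total disconnectedness of $K$ into an actual loop lying inside $V$ and in the complement of $K$, to be the main obstacle, since one must skirt the possibly intricate nearby points of $K$ while still encircling $w_0$.

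The contradiction shows $r\le c_\Omega(z,w_0)$, hence $\tanh r\le\tanh[c_\Omega(z,w_0)]$. Taking the supremum over all admissible injective $f$ and radii $r$ gives $H_D^c(z)\le\tanh[c_\Omega(z,w_0)]$, and finally taking the minimum over $w_0\in K$ yields $H_D^c(z)\le\min_{w\in K}\tanh[c_\Omega(z,w)]$. Combined with the first paragraph, this establishes $i_D^c(z)\le H_D^c(z)\le\min_{w\in K}\tanh[c_\Omega(z,w)]$, completing the proof.
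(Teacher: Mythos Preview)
Your approach coincides with the paper's: both use removability to get $c_D=c_\Omega$ on $D$ and then argue that if some $w_0\in K$ had $c_\Omega(z,w_0)<r$, the simply connected image $f(\mathbb{D})$ would contain a deleted neighbourhood of $w_0$, which is impossible; for the left inequality the paper simply cites \cite[Theorem~1(1)]{recent2}, whose content in this planar setting is exactly your Riemann-mapping observation. You are in fact more explicit than the paper at the topological step---the paper just asserts that simple connectivity of $f_n(\mathbb{D})$ forces $B_D^c(z,r_n)=B_\Omega^c(z,r_n)$ without further comment. Your Jordan-curve construction can be carried out, but the step you flag as the main obstacle becomes routine if you instead use that a planar domain $V$ is simply connected iff $\hat{\mathbb{C}}\setminus V$ is connected: this complement is then a continuum containing both $w_0$ and $\infty$ whose intersection with $\Delta$ lies inside $K$ (since $\Delta\setminus K\subseteq V$), and the boundary-bumping lemma yields a nondegenerate connected subset of $K$, contradicting total disconnectedness.
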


\begin{remark}\rm\label{main320}
Theorem \ref{main10} implies that the domain $D = \Omega\setminus K$, where $\Omega$ is any domain in $\mathbb{C}$ and $K$ is any removable set in $\Omega$, is not $c$-uniformly thick.  Theorem \ref{main11} is about a domain  where the  above inequality does not hold in general. 
\end{remark}

\begin{theorem}\rm\label{main11}
Let  $D = A_{r}\setminus \lbrace p\rbrace$ be a domain in $\mathbb{C}$, where $A_{r} = \lbrace \xi \in \mathbb{C} : r < |\xi| < 1, r > 0 \rbrace$ and $p \in (-1,-\sqrt{r})$. Then for $z\in (r, 1)$
$$i_{D}^{c}(z) \neq  \tanh[c_{A_{r}}(z, p)]\   \mbox{and}\ H_{D}^{c}(z) \neq  \tanh[c_{A_{r}}(z, p)].$$

\end{theorem}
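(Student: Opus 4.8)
The plan is to combine the removability of $\{p\}$ with a topological analysis of Carathéodory balls and one sharp distance comparison on the annulus. First I would record the effect of removability: since a point is a removable set, every bounded holomorphic function on $D=A_r\setminus\{p\}$ extends across $p$, so every holomorphic map $D\to\mathbb{D}$ extends to a holomorphic map $A_r\to\mathbb{D}$; with the trivial restriction in the other direction this yields $c_D=c_{A_r}$ on $D$. Hence every Carathéodory ball of $D$ is the corresponding ball of $A_r$ punctured at $p$, namely $B_D^c(z,\rho)=B_{A_r}^c(z,\rho)\setminus\{p\}$.

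Next I would reduce the two assertions to a single inequality. Write $L^-=\{-t:\ r<t<1\}$ for the radial segment opposite to $z$. Because $z$ is real, $A_r$ and $c_{A_r}$ are invariant under conjugation, so each ball $B_{A_r}^c(z,\rho)$ is symmetric across the real axis. If such a ball meets $L^-$ at a point $-t_1$, then joining $z$ to $-t_1$ by a path in the open connected ball and reflecting it across $\mathbb{R}$ produces a loop of odd winding number about the hole; if instead the ball misses $L^-$ it lies in the slit annulus $A_r\setminus L^-$, which is simply connected. Thus $B_{A_r}^c(z,\rho)$ is simply connected in $A_r$ exactly for $\rho\le\rho_0:=\mathrm{dist}_{c_{A_r}}(z,L^-)=\min_{t}c_{A_r}(z,-t)$. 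Since a simply connected image $f(\mathbb{D})\subseteq D$ cannot contain a loop that is non-contractible in $D$ — whether it wraps the hole or encloses $p$ — the same two obstructions control both invariants, and I would conclude
$$i_D^c(z)=H_D^c(z)=\tanh\big(\min\{\rho_0,\ c_{A_r}(z,p)\}\big),$$
using $i_D^c\le H_D^c$ from Theorem \ref{main10} for the lower bound on $H_D^c$. In particular both asserted non-equalities are equivalent to the strict inequality $\rho_0<c_{A_r}(z,p)$.

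It then suffices to show that the midpoint $-\sqrt r$ of $L^-$ is Carathéodory-closer to $z$ than $p$ is, i.e. $c_{A_r}(z,-\sqrt r)<c_{A_r}(z,p)$, for then $\rho_0\le c_{A_r}(z,-\sqrt r)<c_{A_r}(z,p)$. As $|p|>\sqrt r$ and both points lie on $L^-$, this follows from monotonicity of $t\mapsto c_{A_r}(z,-t)$ on the outer half $[\sqrt r,1)$, which I would derive from the explicit Carathéodory distance of the annulus. The hypotheses enter precisely here: $z\in(r,1)$ makes the balls conjugation-symmetric, while $p\in(-1,-\sqrt r)$ places $p$ strictly outside the middle circle $|\xi|=\sqrt r$, where the radial monotonicity pushes $p$ off the minimizing locus of $t\mapsto c_{A_r}(z,-t)$.

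I expect this last comparison to be the main obstacle. Lower bounds on $c_{A_r}$ come for free from the competitors $\xi\mapsto\xi$ and $\xi\mapsto r/\xi$, but the needed bound on $c_{A_r}(z,-\sqrt r)$ versus $c_{A_r}(z,p)$ requires controlling the \emph{full} supremum over holomorphic maps $A_r\to\mathbb{D}$. I would handle this either by invoking the known extremal (theta/elliptic) functions for the annulus, or by exploiting the $2:1$ branched cover $A_r\to\mathbb{D}$, $\xi\mapsto\xi+r/\xi$, which is ramified exactly over the images of $\pm\sqrt r$ and should explain the distinguished role of the middle circle in the comparison.
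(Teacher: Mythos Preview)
Your approach is essentially the paper's: both use removability to identify $c_D$ with $c_{A_r}$, the conjugation symmetry of the balls (the paper's Lemma~\ref{main500}), the strict inequality $c_{A_r}(z,-\sqrt r)<c_{A_r}(z,p)$ (this is exactly Lemma~\ref{main200}(2), quoted from \cite{recent2}), and the construction of a non-null-homotopic loop once $-\sqrt r$ lies in the ball. Two points deserve attention. First, you tacitly assume the Carath\'eodory ball is connected in order to join $z$ to a point of $L^-$; this is not automatic for the Carath\'eodory distance, and the paper invokes \cite{Frer} precisely for this fact on doubly connected domains. Second, your stronger assertion $i_D^c(z)=H_D^c(z)=\tanh\min\{\rho_0,\,c_{A_r}(z,p)\}$ is unnecessary for the theorem and its lower-bound half does not follow from ``the ball lies in the slit annulus $A_r\setminus L^-$'': a connected open subset of a simply connected planar domain need not itself be simply connected. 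Only the upper bound $H_D^c(z)\le\tanh\rho_0$ (together with $i_D^c\le H_D^c$) is needed, and that part of your argument is sound.
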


Our next result gives the explicit expression of   the squeezing function  for the domain given in the Theorem  \ref{main1}.
\begin{theorem}\rm\label{main2}
Let $\Omega = \mathbb{D}\setminus K$ be an open connected set. Then the squeezing function on $\Omega$ is given by
$$S_{\Omega}(z) = \min_{w\in K}\left\lbrace \left|\dfrac{w-z}{1-\overline{z}w}\right|\right\rbrace.$$
\end{theorem}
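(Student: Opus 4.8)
The plan is to prove the two inequalities
$$S_{\Omega}(z) \ \geq\ \min_{w\in K}\left|\frac{w-z}{1-\overline{z}w}\right| \qquad\text{and}\qquad S_{\Omega}(z) \ \leq\ \min_{w\in K}\left|\frac{w-z}{1-\overline{z}w}\right|$$
separately, where I recall that in one dimension $S_{\Omega}(z)$ is the supremum of all $r>0$ for which there exists an injective holomorphic $f\colon\Omega\to\mathbb{D}$ with $f(z)=0$ and $\{|\xi|<r\}\subseteq f(\Omega)$. Throughout I write $\phi_{a}(\xi)=\frac{a-\xi}{1-\overline{a}\xi}$ for the involutive automorphism of $\mathbb{D}$ that interchanges $a$ and $0$, and I observe that $|\phi_{z}(w)|=\left|\frac{w-z}{1-\overline{z}w}\right|$ is precisely the quantity on the right-hand side; since $K$ is compact, the minimum $\min_{w\in K}|\phi_{z}(w)|$ is attained at some $w_{0}\in K$. (Here $K$ is the removable set of Theorem~\ref{main1}; removability is essential below.)

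For the lower bound the test map is $\phi_{z}$ itself. As $\phi_{z}$ is an automorphism of $\mathbb{D}$ with $\phi_{z}(z)=0$, it restricts to an injective holomorphic map of $\Omega=\mathbb{D}\setminus K$ into $\mathbb{D}$, with image $\phi_{z}(\mathbb{D})\setminus\phi_{z}(K)=\mathbb{D}\setminus\phi_{z}(K)$. The points of $\phi_{z}(K)$ nearest the origin lie at distance $\min_{w\in K}|\phi_{z}(w)|<1$, so the open disc of that radius is contained in $\phi_{z}(\Omega)$, which forces $S_{\Omega}(z)\geq\min_{w\in K}|\phi_{z}(w)|$.

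For the upper bound, let $f\colon\Omega\to\mathbb{D}$ be any injective holomorphic map with $f(z)=0$. Since $f$ is bounded and $K$ is removable, $f$ extends to a holomorphic $\tilde{f}\colon\mathbb{D}\to\mathbb{C}$; as $|\tilde{f}|\leq 1$ on the dense set $\Omega$ and $\tilde{f}$ is nonconstant, the maximum principle gives $\tilde{f}(\mathbb{D})\subseteq\mathbb{D}$, while $\tilde{f}(z)=0$. The crucial point, which I expect to be the main obstacle, is that $\tilde{f}$ remains \emph{injective} on all of $\mathbb{D}$. Granting this, $f(\Omega)=\tilde{f}(\mathbb{D})\setminus\tilde{f}(K)$, so each point $\tilde{f}(w)$ with $w\in K$ is a hole missing from $f(\Omega)$; any disc $\{|\xi|<r\}\subseteq f(\Omega)$ must avoid every such hole, whence $r\leq\min_{w\in K}|\tilde{f}(w)|$. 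Applying the Schwarz--Pick lemma to $\tilde{f}\colon\mathbb{D}\to\mathbb{D}$ with $\tilde{f}(z)=0$ gives $|\tilde{f}(w)|\leq|\phi_{z}(w)|$ for every $w$, so $r\leq\min_{w\in K}|\phi_{z}(w)|$ and therefore $S_{\Omega}(z)\leq\min_{w\in K}|\phi_{z}(w)|$, completing the proof once injectivity is secured.

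To establish injectivity of the extension I would argue by contradiction: if $\tilde{f}(p)=\tilde{f}(q)$ for distinct $p,q\in\mathbb{D}$, take disjoint open discs $U\ni p$ and $V\ni q$; by the open mapping theorem $W=\tilde{f}(U)\cap\tilde{f}(V)$ is an open neighbourhood of the common value. Because a removable set is null for area measure (zero analytic capacity forces zero area), $\tilde{f}(K)$ also has zero area, so I may pick $v\in W\setminus\tilde{f}(K)$; its preimages in $U$ and in $V$ are then distinct points of $\Omega$ with the same image, contradicting the injectivity of $f$. This is the one place where removability is used in full force, beyond the mere total disconnectedness of $K$, and it is the heart of the argument; the remaining two inequalities reduce to the explicit Möbius computation together with Schwarz--Pick.
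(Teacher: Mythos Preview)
Your argument is correct and follows essentially the paper's strategy: the M\"obius map $\phi_z$ as a test map gives the lower bound, and extending a competitor $f$ across $K$ followed by Schwarz--Pick gives the upper bound. Two differences are worth recording. First, the paper invokes \cite[Theorem~2.1]{2012} to pass directly to an extremal map with $\mathbb D(0,S_\Omega(z))\subseteq f(\Omega)$, while you work with an arbitrary $f$ and take the supremum at the end; your version is slightly more self-contained. Second, and more substantively, for the step that each $\tilde f(w)$ with $w\in K$ lies outside $f(\Omega)$, the paper simply writes ``it is easy to see'', whereas you supply a full justification by proving that $\tilde f$ is injective on all of $\mathbb D$: removability forces $K$ (hence $\tilde f(K)$, since $\tilde f$ is locally Lipschitz) to have zero planar measure, so one can choose a value $v\in\tilde f(U)\cap\tilde f(V)\setminus\tilde f(K)$ whose preimages in the disjoint neighbourhoods $U$ and $V$ both lie in $\Omega$, contradicting the injectivity of $f$. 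That open-mapping/zero-area argument is exactly what is needed to substantiate the paper's assertion as well, and --- as you correctly flag --- it genuinely uses removability (through its zero-area consequence) rather than merely the total disconnectedness of $K$.
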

The  $S_{\Omega}(z)$ refer squeezing function which is defined as follows. In 2012, Deng, Guan and Zhang in their paper \cite{2012}, developed the idea of squeezing function by following the work of Liu, Sun and Yau \cite{Yau2004}, \cite{Yau2005} and Yeung \cite{yeung}. 

Let $\Omega\subseteq \mathbb{C}^n$ be a bounded domain. For $z\in{\Omega}$ and a injective holomorphic map $f:\Omega\to \mathbb{B}^n$  with $f(z)=0$, define 
$$S_{\Omega}(z,f) = \sup \{r:\mathbb{B}^n(0,r)\subseteq f(\Omega)\},$$
where $\mathbb{B}^n$ denotes the unit ball in $\mathbb{C}^n$ and $\mathbb{B}^n(0,r)$ denotes the ball entered at origin with radius $r$ in $\mathbb{C}^n$.
The squeezing function on $\Omega$, denoted by $S_{\Omega}$, is defined as
$$S_{\Omega}(z) = \sup_f\{S_{\Omega}(z,f)\},$$ 
where supremum is taken over all injective holomorphic maps $f:\Omega\to \mathbb{B}^n$ with $f(z)=0.$

It is clear that the Fridman invariant, the injectivity radius function and the squeezing function are biholomorphic invariants. It is interesting to note that any domain in class $\Omega'\setminus K$, where $\Omega'$ is any simply connected domain other than $\mathbb{C}$ and $K$ is any removable set in $\Omega'$, is not a holomorphic homogeneous regular domain. A domain is said to be a holomorphic homogeneous regular (HHR) domain if the squeezing function on it admits a positive lower bound. For other important properties of squeezing functions, see, for example: \cite{deng2019, uniform-squeezing, kaushalprachi, recent, rong, rong2, rong3} and references therein.

\begin{remark}\rm\label{main302}
Now the same question arises for the squeezing function: what about the squeezing function if we replace an arbitrary domain instead of the unit disc in Theorem \ref{main2}? In Theorem \ref{main3}, we see that the one-sided inequality still holds.
\end{remark}

\begin{theorem}\rm\label{main3}
Let  $\Omega$ be a bounded domain in $\mathbb{C}$ and $K$ be a removable set in $\Omega$. Then for $D = \Omega\setminus K$, we have
$$S_{D}(z) \leq \min_{w\in K} \tanh[c_{\Omega}(z, w)].$$
\end{theorem}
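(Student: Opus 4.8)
The plan is to fix an arbitrary injective holomorphic map $f\colon D\to\mathbb{D}$ with $f(z)=0$ (here $n=1$, so $\mathbb{B}^{1}=\mathbb{D}$) and to bound the radius of the largest disc about the origin contained in $f(D)$. Since $f$ is bounded and $K$ is removable, $f$ extends to a holomorphic function $\widetilde{f}\colon\Omega\to\mathbb{C}$. As $|f|<1$ on the set $D$, which is dense in $\Omega$ (a removable set is nowhere dense), continuity gives $|\widetilde{f}|\le 1$ on $\Omega$; because $\widetilde{f}$ is nonconstant, the maximum modulus principle upgrades this to $|\widetilde{f}|<1$, so $\widetilde{f}\colon\Omega\to\mathbb{D}$ is holomorphic with $\widetilde{f}(z)=0$.

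The central step, which I expect to be the main obstacle, is to show that the extension $\widetilde{f}$ remains injective on all of $\Omega$; equivalently, that $\widetilde{f}(w)\notin f(D)$ for every $w\in K$. I would argue by the argument principle. Suppose $\widetilde{f}(a)=\widetilde{f}(b)=:c$ for distinct $a,b\in\Omega$, and let $m_{a},m_{b}\ge 1$ be the respective orders of vanishing of $\widetilde{f}-c$. Choosing disjoint small discs about $a$ and $b$, the argument principle shows that $\widetilde{f}-c'$ has exactly $m_{a}$ and $m_{b}$ zeros in these discs (counted with multiplicity) for every $c'$ sufficiently close to $c$. Since a removable set has zero area, $\widetilde{f}(K)$ is the locally Lipschitz image of a null set and hence is itself null, as is the set of critical values of $\widetilde{f}$; therefore one can select $c'$ arbitrarily close to $c$ for which all $m_{a}+m_{b}\ge 2$ preimages are simple and lie in $\Omega\setminus\widetilde f^{-1}(\widetilde f(K))\subseteq D$. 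This produces at least two distinct points of $D$ mapped by $f$ to $c'$, contradicting the injectivity of $f=\widetilde{f}|_{D}$. Hence $\widetilde{f}$ is injective, and in particular $\widetilde{f}(w)$, for $w\in K$, cannot equal $f(w')=\widetilde f(w')$ for any $w'\in D$.

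Granting this, the conclusion follows quickly. Fix $w\in K$ and suppose $\mathbb{D}(0,r)\subseteq f(D)$. If $|\widetilde{f}(w)|<r$ then $\widetilde{f}(w)\in\mathbb{D}(0,r)\subseteq f(D)$, contradicting $\widetilde{f}(w)\notin f(D)$; hence $|\widetilde{f}(w)|\ge r$. On the other hand, since the Carathéodory distance is the supremum of $c_{\mathbb{D}}(g(z),g(w))$ over holomorphic $g\colon\Omega\to\mathbb{D}$, applying this to $g=\widetilde{f}$ together with $\widetilde{f}(z)=0$ and the identity $\tanh c_{\mathbb{D}}(0,\alpha)=|\alpha|$ gives $|\widetilde{f}(w)|=\tanh c_{\mathbb{D}}(0,\widetilde{f}(w))\le\tanh[c_{\Omega}(z,w)]$. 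Combining the two estimates yields $r\le\tanh[c_{\Omega}(z,w)]$ for every $w\in K$, so $r\le\min_{w\in K}\tanh[c_{\Omega}(z,w)]$. Taking the supremum over admissible $r$ and then over all such $f$ gives $S_{D}(z)\le\min_{w\in K}\tanh[c_{\Omega}(z,w)]$, exactly paralleling the upper bound for $H_{D}^{c}$ obtained in Theorem \ref{main10}.
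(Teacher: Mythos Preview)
Your argument is correct and follows essentially the same route as the paper's: extend $f$ holomorphically across $K$ to $\widetilde{f}\colon\Omega\to\mathbb{D}$, observe that $\widetilde{f}(w)\notin f(D)$ for each $w\in K$, and then bound $|\widetilde{f}(w)|$ above by $\tanh[c_{\Omega}(z,w)]$ via the distance-decreasing property of the Carath\'eodory metric. The only differences are that the paper works with a single extremal $f$ (citing \cite[Theorem~2.1]{2012}) rather than taking a supremum over all admissible $f$, and it asserts the key fact $\widetilde{f}(w)\notin f(D)$ by reference to the proof of Theorem~\ref{main2} without the detailed argument-principle and measure-zero justification you supply.
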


\begin{remark}\rm\label{main320}
Theorem \ref{main3} implies that the domain $D = \Omega\setminus K$, where $\Omega$ is any bounded domain in $\mathbb{C}$ and $K$ is any removable set in $\Omega$, is not a HHR domain. In Theorem \ref{main15}, we give an example of domain to illustrate that the above inequality does not hold in general.
\end{remark}

\begin{theorem}\rm\label{main15}
Let  $D = A_{r}\setminus \lbrace -\sqrt{r} \rbrace$ be a domain, where $A_{r} = \lbrace \xi \in \mathbb{C} : r < |\xi| < 1, r > 0 \rbrace$. Then for $z\in(r, \sqrt{r})$ 
$$S_{D}(z) \neq \tanh[c_{A_{r}}(z, -\sqrt{r})].$$
\end{theorem}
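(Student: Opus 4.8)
The plan is to argue by contradiction, using Theorem \ref{main3} as the upper bound and extracting an extremal Carathéodory map as a normal limit of near-optimal squeezing maps. Suppose $S_D(z) = \tanh[c_{A_r}(z, -\sqrt{r})]$ for some $z \in (r, \sqrt{r})$, and write $p = -\sqrt{r}$. Since $S_D(z) \le \tanh[c_{A_r}(z,p)]$ by Theorem \ref{main3}, there is a sequence of injective holomorphic maps $f_n \colon D \to \mathbb{D}$ with $f_n(z) = 0$ and $S_D(z, f_n) = \rho_n \to \tanh[c_{A_r}(z,p)]$. As $\{p\}$ is removable, each $f_n$ extends to a holomorphic $\tilde f_n \colon A_r \to \overline{\mathbb{D}}$, and the reasoning behind Theorem \ref{main3} gives $\rho_n \le |\tilde f_n(p)| \le \tanh[c_{A_r}(z,p)]$. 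By Montel's theorem I pass to a subsequence with $\tilde f_n \to F$ locally uniformly on $A_r$; then $F(z) = 0$ and $|F(p)| = \tanh[c_{A_r}(z,p)]$, so by the maximum principle $F(A_r) \subseteq \mathbb{D}$, $F$ is nonconstant, and $F$ is an extremal function for $c_{A_r}(z,p)$. Crucially, each $f_n = \tilde f_n|_D$ is injective on the domain $D = A_r \setminus \{p\}$, so Hurwitz's theorem for univalent maps forces $F|_D$ to be injective.

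The heart of the proof is the structure of the extremal function $F$. On a bounded doubly connected domain such as $A_r$ (which has real-analytic boundary), the two-point Carathéodory extremal problem $\sup\{|g(p)| : g \in \mathrm{Hol}(A_r, \mathbb{D}),\ g(z) = 0\}$ is solved by a function that extends continuously to $\partial A_r$ and maps it into $\partial \mathbb{D}$; hence $F$ is a proper holomorphic map of $A_r$ onto $\mathbb{D}$. A proper injective holomorphic map onto $\mathbb{D}$ would be a biholomorphism, which is impossible since $A_r$ is not simply connected, so $F$ has degree $d \ge 2$ (in fact $d = 2$, with two critical points by Riemann--Hurwitz, since $\chi(A_r) = 0$). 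I will make this rigorous by invoking the classical fact that the extremal function for this two-point problem is unique up to a rotation and is exactly the degree-two proper map of $A_r$. The involution $\iota(\xi) = r/\xi$, an automorphism of $A_r$ fixing $p = -\sqrt{r}$, together with the conjugation symmetry coming from $z, p \in \mathbb{R}$, is what renders this extremal map explicit and identifies its critical points with the fixed points $\pm\sqrt{r}$ of $\iota$.

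Once $F$ is known to be proper of degree $d \ge 2$, the contradiction is immediate: every regular value $v \in \mathbb{D}$ other than the (finitely many) critical values and the single value $F(p)$ is attained by $F$ at $d \ge 2$ distinct points, and none of them can equal $p$; hence at least two of them lie in $D$, contradicting the injectivity of $F|_D$ established above. Therefore $S_D(z) \ne \tanh[c_{A_r}(z, -\sqrt{r})]$, and together with Theorem \ref{main3} this upgrades to the strict inequality $S_D(z) < \tanh[c_{A_r}(z, -\sqrt{r})]$. The hypotheses $z \in (r, \sqrt{r})$ and $p = -\sqrt{r}$ are what permit writing $c_{A_r}(z, -\sqrt{r})$ explicitly, matching the setting of Theorem \ref{main11}; the genuine obstacle is the step governing the extremal map, namely proving that the normal limit $F$ is a proper degree-two map rather than an injective embedding of $A_r$ into $\mathbb{D}$. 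This is exactly the point at which the non-simple-connectivity of $A_r$ forces the squeezing function, which sees only injective maps, to fall strictly below the Carathéodory bound.
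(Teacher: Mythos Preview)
Your argument is correct in outline but follows a genuinely different route from the paper. The paper proceeds much more directly: assuming $S_D(z)=\tanh c_{A_r}(z,-\sqrt r)$, the extremal squeezing map (which exists by \cite[Theorem~2.1]{2012}) extends across the removable point to an injective map $\tilde f\colon A_r\to\mathbb D$ with $\tilde f(z)=0$ whose image contains $\mathbb D(0,\tanh c_{A_r}(z,-\sqrt r))$, so that $S_{A_r}(z)\ge \tanh c_{A_r}(z,-\sqrt r)$. An explicit computation with the Schottky--Klein prime function then gives $\tanh c_{A_r}(z,-\sqrt r)>r/z$, while the known formula $S_{A_r}(z)=\max\{|z|,r/|z|\}=r/z$ for $z\in(r,\sqrt r)$ yields the contradiction. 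Your approach instead manufactures a Carath\'eodory-extremal map $F$ as a normal limit of near-optimal squeezing maps, shows it is injective on $D$ by Hurwitz, and then reaches a contradiction because the two-point Carath\'eodory extremal on an annulus must be a proper map of degree two. What your approach buys is conceptual clarity and independence from both the explicit prime-function formula and the formula for $S_{A_r}$; it would also generalize to other multiply connected $\Omega$. What it costs is that the crucial step---that any Carath\'eodory extremal on $A_r$ is proper onto $\mathbb D$ (hence of degree at least two)---is a nontrivial classical theorem (of Grunsky/Garabedian type) that you invoke but do not prove or cite precisely; you yourself flag this as ``the genuine obstacle.'' The paper's route, by contrast, is entirely self-contained modulo the two cited formulas.
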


\section{Some notations and Observations} 
The Carathéodory pseudo distance between $\xi_{1}$ and $\xi_{2}$ on a domain $\Omega\subseteq \mathbb{C}^{n}$, denoted by $c_{\Omega}(\xi_{1}, \xi_{2})$, is defined as 
$$c_{\Omega}(\xi_{1}, \xi_{2}) = \displaystyle\sup_{f}\lbrace \tanh^{-1}|\alpha| : f:\Omega\to \mathbb{D}\ \text{holomorphic}, f(\xi_{1})=0, f(\xi_{2})= \alpha \rbrace.$$ 
Let $d_{\rho}(\xi_{1}, \xi_{2})$ denote the Poincaré distance between $\xi_{1}$ and $\xi_{2}$ in $\mathbb{D}$. Let $\mu:[0, 1)\to \mathbb{R}_{\geq 0}$ be a real-valued function defined as
$\mu(x) = \dfrac{1}{2}\log\dfrac{1+x}{1-x}$. It is clear that $\mu$ is a strictly increasing function, and for $\xi\in\mathbb{D}$, $\mu(|\xi|)$ is the Poincaré distance between 0 and $\xi$. One can observe that the inverse of this function can be expressed as $\tanh(x)$. 

In \cite{cara}, Simha given the explicit formula for Carathéodory distance for the annulus $D = \lbrace \xi\in \mathbb{C} : 1/R < |\xi| < R \rbrace $. Let $\xi_{1}, \xi_{2}\in D$, then 
\begin{equation}\label{eq400:sub}
c_{D}(\xi_{1}, \xi_{2}) = \tanh^{-1}c'_{D}(\xi_{1}, \xi_{2}),
\end{equation}
where
\begin{equation}\label{eq500:sub}
c'_{D}(\xi_{1}, \xi_{2}) = \dfrac{1}{R|\xi_{2}|}|f(\xi_{1}, \xi_{2})|\cdot|f(|\xi_{1}|^{-1}, -|\xi_{2}|)|,
\end{equation}
and for $\xi_{1} > 0$,
$$f(\xi_{1}, \xi_{2}) = \left(1-\dfrac{\xi_{2}}{\xi_{1}}\right)\dfrac{\displaystyle\prod_{n=1}^{\infty}\left(1-\dfrac{\xi_{2}}{R^{4n}\xi_{1}}\right) \left(1-\dfrac{\xi_{1}}{R^{4n}\xi_{2}}\right)}{\displaystyle\prod_{n=1}^{\infty}\left(1-\dfrac{\xi_{1}\xi_{2}}{R^{4n-2}}\right)\left(1-\dfrac{1}{R^{4n-2}\xi_{1}\xi_{2}}\right)}.$$

After some computation, the expression can be reformulated in terms of the Schottky-Klein prime function $\omega(\xi_{1}, \xi_{2})$ for annulus. Let $\xi_{1} > 0, \xi_{2}\in A_{r} = \lbrace \xi\in \mathbb{C} : r < |\xi| < 1, r > 0 \rbrace $, then the Carathéodory distance between $\xi_{1}$ and $\xi_{2}$ on $A_{r}$ is given by
\begin{equation}\label{eq401:sub}
c_{A_{r}}(\xi_{1}, \xi_{2}) = \tanh^{-1}c'_{A_{r}}(\xi_{1}, \xi_{2}),
\end{equation}
where
\begin{equation}\label{eq501:sub}
c'_{A_{r}}(\xi_{1}, \xi_{2}) = \dfrac{1}{\xi_{1}}\left| \dfrac{\omega(\xi_{2},\xi_{1})}{\xi_{1}\omega(\xi_{2},\xi_{1}^{-1})}\right|\left| \dfrac{\omega\left(\xi_{1}, -\dfrac{r}{|\xi_{2}|}\right)}{\dfrac{r}{|\xi_{2}|}\omega\left(\xi_{1}, -\dfrac{|\xi_{2}|}{r}\right)}\right|,
\end{equation}
and $\omega(\xi_{1}, \xi_{2})$ is expressed as
$$\omega(\xi_{1}, \xi_{2}) = (\xi_{1}-\xi_{2})\displaystyle\prod_{n=1}^{\infty}\dfrac{(\xi_{1}-r^{2n}\xi_{2})(\xi_{2}-r^{2n}\xi_{1})}{(\xi_{1}-r^{2n}\xi_{1})(\xi_{2}-r^{2n}\xi_{2})}\   \mbox{for}\  \xi_{1},\xi_{2}\in \mathbb{C}\setminus \lbrace 0 \rbrace.$$
The Schottky-Klein prime function satisfies the following identities see {\cite[Section~5.3]{crowdy}}.
\begin{equation}\label{eq1001:sub}
\omega(\xi_{1}, \xi_{2}) = -\omega(\xi_{2}, \xi_{1})
\end{equation}
\begin{equation}\label{eq1002:sub}
\omega(\overline{\xi_{1}}, \overline{\xi_{2}}) = \overline{\omega(\xi_{1}, \xi_{2})}
\end{equation}
\begin{equation}\label{eq1003:sub}
\omega(1/\xi_{1}, 1/\xi_{2}) = -\dfrac{\omega(\xi_{1}, \xi_{2})}{\xi_{1}\xi_{2}}
\end{equation}
\begin{equation}\label{eq1004:sub}
\omega(r^{2}\xi_{1}, \xi_{2}) = -\dfrac{\xi_{2}\omega(\xi_{1}, \xi_{2})}{\xi_{1}}
\end{equation}

\begin{lemma}\rm\label{main500}
Fix $z > 0 \in A_{r}$. Then we have $c_{A_{r}}(z, \overline{w}) =  c_{A_{r}}(z, w)\  \mbox{for all}\  w\in A_{r}.$
\end{lemma}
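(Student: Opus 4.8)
The plan is to exploit the fact that complex conjugation is a natural anti-holomorphic symmetry of the annulus, under which the Carathéodory distance is invariant. Write $\sigma(\xi) = \overline{\xi}$. Since $|\overline{\xi}| = |\xi|$, the map $\sigma$ carries $A_{r}$ bijectively onto itself and is anti-holomorphic. First I would record the general invariance statement $c_{A_{r}}(\overline{\xi_{1}}, \overline{\xi_{2}}) = c_{A_{r}}(\xi_{1}, \xi_{2})$ for all $\xi_{1}, \xi_{2}\in A_{r}$, and then specialize to the case at hand.

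To establish this invariance, fix $\xi_{1}, \xi_{2}$ and let $f : A_{r}\to \mathbb{D}$ be any holomorphic competitor in the definition of $c_{A_{r}}(\overline{\xi_{1}}, \overline{\xi_{2}})$, so that $f(\overline{\xi_{1}}) = 0$. Define $h(\xi) = \overline{f(\overline{\xi})}$. Because $\xi\mapsto\overline{\xi}$ is anti-holomorphic and $f$ is holomorphic, the composite $\xi\mapsto f(\overline{\xi})$ is anti-holomorphic, so its conjugate $h$ is holomorphic on $A_{r}$; moreover $|h(\xi)| = |f(\sigma(\xi))| < 1$, whence $h : A_{r}\to\mathbb{D}$. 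We have $h(\xi_{1}) = \overline{f(\overline{\xi_{1}})} = 0$ and $|h(\xi_{2})| = |f(\overline{\xi_{2}})|$. Thus every competitor for $c_{A_{r}}(\overline{\xi_{1}}, \overline{\xi_{2}})$ yields a competitor for $c_{A_{r}}(\xi_{1}, \xi_{2})$ with the same value $\tanh^{-1}|h(\xi_{2})|$, and since $\sigma$ is an involution the construction is reversible. Taking suprema over both families gives $c_{A_{r}}(\overline{\xi_{1}}, \overline{\xi_{2}}) = c_{A_{r}}(\xi_{1}, \xi_{2})$.

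Finally I would specialize to $\xi_{1} = z$ with $z > 0$, so that $\overline{z} = z$, and $\xi_{2} = w$. The invariance then reads $c_{A_{r}}(z, \overline{w}) = c_{A_{r}}(\overline{z}, \overline{w}) = c_{A_{r}}(z, w)$, which is precisely the claim.

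As an alternative that stays within the explicit notation already set up, one can verify the identity directly from formula \eqref{eq501:sub}. The second factor there depends on $w$ only through $|w| = |\overline{w}|$, so it is unchanged under $w\mapsto\overline{w}$. For the first factor, using $\overline{z} = z$ together with the reflection identity \eqref{eq1002:sub} gives $\omega(\overline{w}, z) = \omega(\overline{w}, \overline{z}) = \overline{\omega(w, z)}$ and likewise $\omega(\overline{w}, z^{-1}) = \overline{\omega(w, z^{-1})}$, so the corresponding moduli agree. Hence $c'_{A_{r}}(z, \overline{w}) = c'_{A_{r}}(z, w)$, and applying the increasing function $\tanh^{-1}$ yields the conclusion. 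There is essentially no hard step here; the only point requiring care is checking that $h = \overline{f\circ\sigma}$ is genuinely holomorphic with values in $\mathbb{D}$, which is the routine observation that conjugating an anti-holomorphic function produces a holomorphic one.
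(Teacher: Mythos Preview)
Your proof is correct. The paper proceeds exactly as in your \emph{alternative} paragraph: it plugs $\overline{w}$ into the explicit expression \eqref{eq501:sub}, observes the second factor depends only on $|w|$, and uses $z=\overline{z}$ together with \eqref{eq1002:sub} to handle the first factor.

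Your primary argument is different and, in a sense, more transparent: instead of relying on Simha's formula for the annulus, you invoke the anti-holomorphic self-map $\sigma(\xi)=\overline{\xi}$ of $A_{r}$ and show, directly from the definition of the Carath\'eodory distance, that $c_{A_{r}}(\overline{\xi_{1}},\overline{\xi_{2}})=c_{A_{r}}(\xi_{1},\xi_{2})$ by pairing competitors via $h=\overline{f\circ\sigma}$. This route works for any domain symmetric under conjugation and does not touch the Schottky--Klein prime function at all. The paper's computation, by contrast, stays within the explicit machinery already set up and has the minor advantage of reinforcing familiarity with the identities \eqref{eq1001:sub}--\eqref{eq1004:sub}, which are used elsewhere.
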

\begin{proof}
Since by \eqref{eq501:sub} 
\begin{align*}
c'_{A_{r}}(z, \overline{w}) = \dfrac{1}{z}\left| \dfrac{\omega(\overline{w}, z)}{z\omega(\overline{w}, z^{-1})}\right|\left| \dfrac{\omega\left(z, -\dfrac{r}{|\overline{w}|}\right)}{\dfrac{r}{|\overline{w}|}\omega\left(z, -\dfrac{|\overline{w}|}{r}\right)}\right| = \dfrac{1}{z}\left| \dfrac{\omega(\overline{w}, \overline{z})}{z\omega(\overline{w}, \overline{z}^{-1})}\right|\left| \dfrac{\omega\left(z, -\dfrac{r}{|w|}\right)}{\dfrac{r}{|w|}\omega\left(z, -\dfrac{|w|}{r}\right)}\right|.
\end{align*}
By \eqref{eq1002:sub} 
\begin{align*}
c'_{A_{r}}(z, \overline{w}) = \dfrac{1}{z}\left| \dfrac{\overline{\omega(w, z)}}{z\overline{\omega(w, z^{-1})}}\right|\left| \dfrac{\omega\left(z, -\dfrac{r}{|w|}\right)}{\dfrac{r}{|w|}\omega\left(z, -\dfrac{|w|}{r}\right)}\right| = c'_{A_{r}}(z, w)
\end{align*} 
It follows that $c_{A_{r}}(z, \overline{w}) =  c_{A_{r}}(z, w)\  \mbox{for all}\  w\in A_{r}.$
\end{proof}

\begin{lemma}{\cite[Lemma 1]{recent2}}\rm\label{main200}
Let $r < z < 1$ and $-1 < w < -r$. Then we have the following.
\begin{enumerate}
\item $c_{A_{r}}(z, w) = c_{A_{r}}\left(z, \dfrac{r}{w}\right)$.

\item Fix $z$ and let $c_{A_{r}}(z, w)$ be a function of $w$, where $w$ varies between $(-1, -r)$. Then the minimum value of $c_{A_{r}}(z, w)$ is attained at $w = -\sqrt{r}$ i.e.
$$c_{A_{r}}(z, -\sqrt{r}) < c_{A_{r}}(z, w)\  \mbox{for all}\  w\in (-1, -r)\setminus \lbrace -\sqrt{r} \rbrace.$$
\end{enumerate}

\end{lemma}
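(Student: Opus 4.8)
The plan is to treat the two parts separately, proving (1) by a direct manipulation of Simha's explicit formula and proving (2) by combining the symmetry from (1) with a one-variable monotonicity argument.

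For part (1), I would start from the prime-function expression \eqref{eq501:sub} for $c'_{A_{r}}(z,w)$ with $\xi_{1}=z>0$ and $\xi_{2}=w\in(-1,-r)$. The first step is bookkeeping: since $w<0$ one has $|w|=-w$, so the arguments $-r/|w|$, $-|w|/r$, $r/|w|$ occurring in \eqref{eq501:sub} become $r/w$, $w/r$, $-r/w$ respectively, and the same substitution applied to $w':=r/w$ (which again lies in $(-1,-r)$, with fixed point $-\sqrt{r}$) turns its arguments into $w$, $1/w$, $-w$. Writing both $c'_{A_{r}}(z,w)$ and $c'_{A_{r}}(z,w')$ out and using the antisymmetry \eqref{eq1001:sub} to drop the order of the entries inside each $|\omega(\cdot,\cdot)|$, one cancels the common factors and is left with the single identity
\[
\frac{|w|}{z}\,\bigl|\omega(z,1/w)\bigr| = \bigl|\omega(1/z,w)\bigr|.
\]
This is exactly the inversion identity \eqref{eq1003:sub} applied with $\xi_{1}=z$, $\xi_{2}=1/w$, so equality follows. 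The computation is routine; the only place to be careful is keeping track of the $|w|=-w$ conversions and of which $\omega$-factors cancel.

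For part (2), I would use (1) to pass to a coordinate in which the symmetry is transparent. Writing $w=-t$ with $t=|w|\in(r,1)$ and setting $g(t):=c_{A_{r}}(z,-t)$, part (1) reads $g(t)=g(r/t)$. Substituting $t=\sqrt{r}\,e^{s}$, so that $s$ ranges over an interval symmetric about $0$ and $t=\sqrt{r}$ corresponds to $s=0$, turns this into the statement that $s\mapsto g(\sqrt{r}\,e^{s})$ is even. Differentiating the relation $g(t)=g(r/t)$ gives $g'(r/t)=-(t^{2}/r)\,g'(t)$, so that $g'(\sqrt{r})=0$ automatically, and it suffices to show $g'(t)>0$ for all $t\in(\sqrt{r},1)$; the opposite sign on $(r,\sqrt{r})$ then follows from the same relation, and the strict global minimum at $t=\sqrt{r}$ results.

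The remaining and main task is the sign of $g'$ on $(\sqrt{r},1)$. Here I would expand $\log c'_{A_{r}}(z,-t)$ using the infinite-product form of $\omega$, so that it becomes a sum of terms such as $\log|z+r^{2n}t|$ and $\log|{-t}+r^{2n}z|$, together with factors depending only on $|w|$; differentiating term by term in $t$ produces an explicit series of rational functions. The crux is to show this series is positive for $t\in(\sqrt{r},1)$, which I would attempt by pairing the $n$-th positive and negative contributions and exploiting the involution $t\leftrightarrow r/t$ underlying part (1). This sign analysis is where the real work lies and is the step I expect to be the main obstacle; an alternative would be to establish strict convexity of $s\mapsto g(\sqrt{r}\,e^{s})$, since an even, strictly convex function attains its unique minimum at $s=0$, although the second derivative is not sign-definite term by term, so the first-derivative/pairing route looks more promising.
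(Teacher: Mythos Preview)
The paper does not prove this lemma at all: it is quoted verbatim from \cite[Lemma~1]{recent2} and used as a black box, so there is no ``paper's own proof'' to compare against.

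As for your proposal on its own merits: your plan for part~(1) is sound and goes through, with one correction. After cancelling the common numerator factors $|\omega(z,w)|\,|\omega(z,r/w)|$ you are not left with a \emph{single} application of the inversion identity~\eqref{eq1003:sub} but with two, one for each of the remaining denominator pairs. Concretely, \eqref{eq1003:sub} applied with $(\xi_{1},\xi_{2})=(z,1/w)$ gives $|\omega(z,1/w)|=\dfrac{z}{|w|}\,|\omega(w,z^{-1})|$, and applied with $(\xi_{1},\xi_{2})=(w/r,z)$ gives $|\omega(r/w,z^{-1})|=\dfrac{r}{|w|z}\,|\omega(z,w/r)|$; multiplying these and inserting the prefactors $|w|/r$ and $1/|w|$ from the two expressions yields the desired equality.

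Your plan for part~(2) is a natural reduction, and the symmetry $g(t)=g(r/t)$ together with $g'(\sqrt{r})=0$ is immediate from~(1). However the remaining step---the positivity of $g'$ on $(\sqrt{r},1)$ via termwise differentiation of the logarithm of the prime-function product---is not merely bookkeeping. The series you obtain mixes terms of both signs, and the ``pairing'' you allude to does not follow from the involution $t\leftrightarrow r/t$ alone (that involution is what produced the symmetry in the first place, so it cannot by itself pin down the sign of the odd part of $g$). You would need a genuinely new estimate, e.g.\ controlling the second factor in~\eqref{eq501:sub} (which depends only on $|w|$) and showing it dominates. As written, part~(2) is a plausible outline with its central analytic step left open; since the lemma is available in \cite{recent2}, it may be more efficient simply to cite it, as the present paper does.
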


\section{Proof of theorems}

\begin{proof}[Proof of Theorem \ref{main1}]
Let us assume $ p = \displaystyle\min_{w\in K}c_{\mathbb{D}}(z, w)$. First we prove that the ball $B_{\Omega}^{c}(z, p) = \lbrace \xi\in \Omega : c_{\Omega}(z, \xi) < p\rbrace$ is simply connected. Since $K$ is removable set, it is easy to see that $c_{\Omega}(\xi_{1}, \xi_{2}) = c_{\mathbb{D}}(\xi_{1}, \xi_{2})$ for all $\xi_{1}, \xi_{2}\in \Omega$. This implies $B_{\Omega}^{c}(z, p) = \lbrace \xi\in \Omega : c_{\mathbb{D}}(z, \xi) < p\rbrace$. We claim that $B_{\Omega}^{c}(z, p) = B_{\mathbb{D}}^{c}(z, p)$.  It is obvious that $B_{\Omega}^{c}(z, p) \subseteq B_{\mathbb{D}}^{c}(z, p)$. For other side containment, let us assume there is some $\xi'\in B_{\mathbb{D}}^{c}(z, p)$ with $\xi'\notin B_{\Omega}^{c}(z, p)$. This implies $\xi'\in K$ such that $c_{\mathbb{D}}(z, \xi') < p$. This contradict the fact that $ p = \displaystyle\min_{w\in K}c_{\mathbb{D}}(z, w)$. Therefore, $B_{\Omega}^{c}(z, p) = B_{\mathbb{D}}^{c}(z, r)$ which is just a Poincaré ball in the unit disc. Hence $B_{\Omega}^{c}(z, p)$ is simply connected. Thus 
\begin{equation}\label{eq5:sub}
i_{\Omega}^{c}(z) \geq \tanh p = \min_{w\in K} \left\lbrace \left|\dfrac{w-z}{1-\overline{z}w}\right|\right\rbrace.
\end{equation}
For $z\in \Omega$, let $f: \mathbb{D}\to \Omega$ be any injective holomorphic function with $B_{\Omega}^{c}(z, r)\subseteq f(\mathbb{D}) \subseteq \Omega$. This implies
\begin{equation}\label{eq6:sub}
B_{\Omega}^{c}(z, r) = \lbrace \xi\in \Omega : c_{\mathbb{D}}(z, \xi) < r\rbrace \subseteq f(\mathbb{D}) \subseteq \Omega.
\end{equation}
Since $f(\mathbb{D})$ is simply connected domain in $\Omega$, we have $B_{\Omega}^{c}(z, r) = B_{\mathbb{D}}^{c}(z, r)$. By \eqref{eq6:sub} 
\begin{equation}\label{eq7:sub}
B_{\mathbb{D}}^{c}(z, r) = \lbrace \xi\in \mathbb{D} : c_{\mathbb{D}}(z, \xi) < r\rbrace\subseteq f(\mathbb{D}) \subseteq \Omega.
\end{equation}
Consider a conformal map $g : \mathbb{D}\to \mathbb{D}$ such that 
$$g(\xi) = \dfrac{\xi-z}{1-\overline{z}\xi}.$$
Note that $g$ is an isometry, it maps $B_{\mathbb{D}}^{c}(z, r)$ onto the ball with center zero and radius $r$ in the unit disc. Therefore, by \eqref{eq7:sub}
$$B_{\mathbb{D}}^{c}(0, r) =\lbrace \xi\in \mathbb{D} : c_{\mathbb{D}}(0, \xi) < r \rbrace \subseteq g(\Omega).$$
Thus, we get  
\begin{equation}\label{eq12:sub}
H_{\Omega}^{c}(z) \leq \tanh r \leq \displaystyle\min_{w\in K} \left\lbrace \left|\dfrac{w-z}{1-\overline{z}w}\right|\right\rbrace.
\end{equation}
Hence by {\cite[Theorem~1(1)]{recent2}}, \eqref{eq5:sub} and \eqref{eq12:sub}
$$\min_{w\in K} \left\lbrace \left|\dfrac{w-z}{1-\overline{z}w}\right|\right\rbrace \leq i_{\Omega}^{c}(z) \leq H_{\Omega}^{c}(z) \leq \displaystyle\min_{w\in K} \left\lbrace \left|\dfrac{w-z}{1-\overline{z}w}\right|\right\rbrace.$$
\end{proof}

\begin{proof}[Proof of Theorem \ref{main10}]
For $z\in D$, let $H_{D}^{c}(z) = c$. Then by definition there exist a sequence of positive numbers $\lbrace r_{n} \rbrace$ converging to $\tanh^{-1}c$ and a sequence of injective holomorphic functions $f_{n} : \mathbb{D}\to D$ such that $B_{D}^{c}(z, r_{n}) \subseteq f_{n}(\mathbb{D}) \subseteq D$ for all $n$. It is easy to see that $c_{D}(\xi_{1}, \xi_{2}) = c_{\Omega}(\xi_{1}, \xi_{2})$ for all $\xi_{1}, \xi_{2}\in D$. This implies
\begin{equation}\label{eq22:sub}
B_{D}^{c}(z, r_{n}) = \lbrace \xi\in D : c_{\Omega}(z, \xi) < r_{n}\rbrace \subseteq f_{n}(\mathbb{D}) \subseteq D\ \mbox{for all}\ n.
\end{equation}
Since $f_{n}(\mathbb{D})$ is simply connected domain in $D$, we have $B_{D}^{c}(z, r_{n}) = B_{\Omega}^{c}(z, r_{n})$ for all $n$. By \eqref{eq22:sub} 
\begin{equation}\label{eq20:sub}
B_{\Omega}^{c}(z, r_{n}) = \lbrace \xi\in \Omega : c_{\Omega}(z, \xi) < r_{n}\rbrace\subseteq f_{n}(\mathbb{D}) \subseteq D\ \mbox{for all}\ n.
\end{equation}
If $c > \displaystyle\min_{w\in K} \tanh[c_{\Omega}(z, w)] $, then there is some $w'\in K$ such that $w' \in B_{\Omega}^{c}(z, r_{n})$ for some large $n$. This is not possible because $B_{\Omega}^{c}(z, r_{n})\subseteq D$ for all $n$. Thus
\begin{equation}\label{eq21:sub}
H_{\Omega}^{c}(z) \leq \min_{w\in K} \tanh[c_{\Omega}(z, w)].
\end{equation}
Hence the result follows from \cite[Theorem~1(1)]{recent2} and \eqref{eq21:sub}.
\end{proof}

\begin{proof}[Proof of Theorem \ref{main11}]
For $z\in (r, 1)$, let us assume $i_{D}^{c}(z) = \tanh[c_{A_{r}}(z, p)]$. Then by definition, there exists a sequence of positive numbers $\lbrace r_{n} \rbrace$ converging to $c_{A_{r}}(z, p)$ such that $B_{D}^{c}(z, r_{n})$ are simply connected for all $n$. By Lemma \ref{main200}, $c_{A_{r}}(z, -\sqrt{r}) < c_{A_{r}}(z, \xi)$ for all $\xi \in (-1, -r)\setminus \lbrace -\sqrt{r} \rbrace$, this implies that $-\sqrt{r}\in B_{D}^{c}(z, r_{n})$ for large $n$. Observe that $B_{D}^{c}(z, r_{n}) = B_{A_{r}}^{c}(z, r_{n})$ for large $n$. By \cite{Frer}, $B_{A_{r}}^{c}(z, r_{n})$ is connected. Hence $B_{A_{r}}^{c}(z, r_{n})$ is path connected in $\mathbb{C}$. Also, by Lemma \ref{main500}, $B_{A_{r}}^{c}(z, r_{n})$ is symmetric about $x$- axis. This implies there exist two paths $\gamma_{1}$ and $\gamma_{2}$ in $B_{A_{r}}^{c}(z, r_{n})$ connecting $z$ and $-\sqrt{r}$, lying on the upper half plane and lower half plane respectively for large $n$. Let $\gamma$ denote the closed curve in $B_{A_{r}}^{c}(z, r_{n})$ induced by $\gamma_{1}$ and $\gamma_{2}$ for large $n$. Then $\gamma$ is a closed curve in $A_{r}$. Note that $\gamma$ is not null-homotopic in $A_{r}$. This is a contradiction because $B_{D}^{c}(z, r_{n}) = B_{A_{r}}^{c}(z, r_{n})$ is simply connected. 

For $z\in (r, 1)$, let us assume $H_{D}^{c}(z) = \tanh[c_{A_{r}}(z, p)]$. Then by definition, there exist a sequence of positive numbers $\lbrace r_{n} \rbrace$ converging to $c_{A_{r}}(z, p)$ and a sequence of injective holomorphic functions $f_{n} : \mathbb{D}\to D$ such that $B_{D}^{c}(z, r_{n}) \subseteq f_{n}(\mathbb{D}) \subseteq D$ for all $n$. Observe that $B_{D}^{c}(z, r_{n}) = B_{A_{r}}^{c}(z, r_{n})$ for large $n$. Similarly as above, we can find a closed curve $\gamma$ in $B_{A_{r}}^{c}(z, r_{n})$ which is not null-homotopic in $A_{r}$ for large $n$. This contradict the fact that $f_{n}(\mathbb{D})$ is simply connected for all $n$.

\end{proof}

\begin{proof}[Proof of Theorem \ref{main2}]
For $z\in \Omega $, by using the automorphism of the unit disc, it is easy to see that
\begin{equation}\label{eq1:sub}
S_{\Omega}(z) \geq \min_{w\in K} \left\lbrace \left|\dfrac{w-z}{1-\overline{z}w}\right|\right\rbrace.
\end{equation}
By {\cite[Theorem~2.1]{2012}}, there is an injective holomorphic function $f :\Omega \to \mathbb{D}$ with $f(z) = 0$ such that $\mathbb{D}(0 , S_{\Omega}(z))\subseteq f(\Omega)$. Since $K$ is removable set, there exists a holomorphic function $\tilde{f}:\mathbb{D}\to \mathbb{C}$ with $\tilde{f}(\xi) = f(\xi)$ for all $\xi\in \Omega$. Note that $K$ have no interior. This implies $\tilde{f}:\mathbb{D}\to \overline{\mathbb{D}}$ and by the maximum modulus principle $\tilde{f}:\mathbb{D}\to \mathbb{D}$. It is easy to see that $\tilde{f}(w)\notin f(\Omega)$ for all $w\in K$. This implies
\begin{equation}\label{eq2:sub}
S_{\Omega}(z) \leq |\tilde{f}(w)|\ \mbox{for all}\ w\in K.
\end{equation}
By distance decreasing property of Poincaré metric
\begin{align*}
d_{\rho}(0 , \tilde{f}(w)) = d_{\rho}(\tilde{f}(z) , \tilde{f}(w)) \leq d_{\rho}(z , w) = d_{\rho}\left(0 , \dfrac{w-z}{1-\overline{z}w}\right).
\end{align*}
Therefore,
\begin{align*}
\mu(|\tilde{f}(w)|)&\leq \mu\left(\left|\dfrac{w-z}{1-\overline{z}w}\right|\right).
\end{align*}
Thus, we get
\begin{equation}\label{eq3:sub}
|\tilde{f}(w)|\leq \left|\dfrac{w-z}{1-\overline{z}w}\right|\ \mbox{for all}\ w\in K.
\end{equation}
By \eqref{eq2:sub} and \eqref{eq3:sub},
\begin{equation}\label{eq4:sub}
S_{\Omega}(z) \leq \min_{w\in K} \left\lbrace \left|\dfrac{w-z}{1-\overline{z}w}\right|\right\rbrace.
\end{equation}
Hence the result follows from \eqref{eq1:sub} and \eqref{eq4:sub}.
\end{proof}

\begin{proof}[Proof of Theorem \ref{main3}]
For $z\in \Omega$, by {\cite[Theorem~2.1]{2012}}, there exists an injective holomorphic function $f :D\to \mathbb{D}$ with $f(z) = 0$ such that $\mathbb{D}(0 , S_{D}(z))\subseteq f(D)$. Similar to the Proof of Theorem \ref{main2}, we can find a holomorphic function $\tilde{f}: \Omega\to \mathbb{D}$ with $\tilde{f}(\xi) = f(\xi)$ for all $\xi\in D$ and $\tilde{f}(w)\notin f(D)$ for all $w\in K$. By decreasing property of Carathéodory metric 
\begin{equation}
c_{\mathbb{D}}(\tilde{f}(z), \tilde{f}(w)) \leq c_{\Omega}(z, w)\ \mbox{for all}\ w\in K.
\end{equation}
\begin{align*}
c_{\mathbb{D}}(0, \tilde{f}(w)) = \tanh^{-1}|\tilde{f}(w)| \leq c_{\Omega}(z, w)\ \mbox{for all}\ w\in K.
\end{align*}
Therefore,
\begin{align*}
S_{\Omega}(z) \leq |\tilde{f}(w)|  \leq \tanh[c_{\Omega}(z, w)]\ \mbox{for all}\ w\in K.
\end{align*}
Hence
\begin{align*}
S_{D}(z) \leq \min_{w\in K} \tanh[c_{\Omega}(z, w)].
\end{align*}
\end{proof}

\begin{proof}[Proof of Theorem \ref{main15}]

For $z\in(r, \sqrt{r})$, suppose that $S_{D}(z) = \tanh [c_{A_{r}}(z, -\sqrt{r})]$.
This implies there exists a injective holomorphic function $f : D \to \mathbb{D}$ with $f(z) = 0$ such that $\mathbb{D}(0 , \tanh c_{A_{r}}(z, -\sqrt{r}))\subseteq f(D)$. Therefore, there is an injective holomorphic function $\tilde{f}: A_{r}\to \mathbb{D}$ with $\tilde{f}(\xi) = f(\xi)$ for all $\xi\in D$. It follows that 
\begin{align}\label{eq201:sub}
S_{A_{r}}(z) \geq \tanh c_{A_{r}}(z, -\sqrt{r}).
\end{align}
By \eqref{eq401:sub} and \eqref{eq501:sub}
\begin{align}\label{eq202:sub}
\tanh c_{A_{r}}(z, -\sqrt{r}) = \dfrac{1}{z}\left(\dfrac{\omega(z, -r^{1/2})}{\sqrt{r}\omega(z,-r^{-1/2})}\right)^{2}.
\end{align}
Consider 
$$ f(z) = \dfrac{\omega(z, -r^{1/2})}{\sqrt{r}\omega(z,-r^{-1/2})}$$
for $z\in A_{r}$. This is a conformal map from $A_{r}$ onto circularly slit disc of radius $\sqrt{r}$ with $f(-\sqrt{r}) = 0$ and $f(\partial\mathbb{D})= \partial\mathbb{D}$. By \eqref{eq1002:sub}, it is easy to see that $f(\overline{z})= \overline{f(z)}$. It follows that $f(z)\in (-1, \sqrt{r})$ for $z\in(-1, -r)$ and $f(z)\in (\sqrt{r}, 1)$ for $z\in(r, 1)$. Therefore, by \eqref{eq202:sub}
$$\tanh c_{A_{r}}(z, -\sqrt{r}) > \dfrac{r}{z}.$$
Hence, by \eqref{eq201:sub}
\begin{align*}
S_{A_{r}}(z) > \dfrac{r}{z},
\end{align*}
which is not possible by the expression of squeezing function on annulus. In \cite{recent}, the expression of squeezing function on annulus is given by 
$$ S_{A_{r}}(z) = \max \left\lbrace |z|, \dfrac{r}{|z|} \right\rbrace.$$
\end{proof}

\section*{Acknowledgement}
We are thankful to Prof. Kaushal Verma for raising the problems related to the removable sets.

\medskip

\end{document}